\documentclass{article} 


 
 \usepackage[usenames,dvipsnames]{pstricks}
 \usepackage{epsfig}
 \usepackage{pst-grad} 
 \usepackage{pst-plot} 
 \usepackage[space]{grffile} 
 \usepackage{etoolbox} 
 \makeatletter 
 \patchcmd\Gread@eps{\@inputcheck#1 }{\@inputcheck"#1"\relax}{}{}
 \makeatother


\usepackage[usenames,dvipsnames]{pstricks}
\usepackage{epsfig}
\usepackage{pst-grad} 
\usepackage{pst-plot} 
\usepackage[space]{grffile} 
\usepackage{etoolbox} 
\makeatletter 
\patchcmd\Gread@eps{\@inputcheck#1 }{\@inputcheck"#1"\relax}{}{}
\makeatother

 \usepackage[usenames,dvipsnames]{pstricks}
 \usepackage{epsfig}
 \usepackage{pst-grad} 
 \usepackage{pst-plot} 
 \usepackage[space]{grffile} 
 \usepackage{etoolbox} 
 \makeatletter 
 \patchcmd\Gread@eps{\@inputcheck#1 }{\@inputcheck"#1"\relax}{}{}
 \makeatother

\usepackage{amsmath}
\usepackage{amssymb}
\usepackage{amsthm}
\usepackage[latin1]{inputenc}
     
\usepackage{graphicx}


\usepackage[usenames,dvipsnames]{pstricks}
\usepackage{epsfig}
\usepackage{pst-grad} 
\usepackage{pst-plot} 

\usepackage{ifthen}
\usepackage{color}


\newcommand{\intav}[1]{\mathchoice {\mathop{\vrule width 6pt height 3 pt depth  -2.5pt
\kern -8pt \intop}\nolimits_{\kern -6pt#1}} {\mathop{\vrule width
5pt height 3  pt depth -2.6pt \kern -6pt \intop}\nolimits_{#1}}
{\mathop{\vrule width 5pt height 3 pt depth -2.6pt \kern -6pt
\intop}\nolimits_{#1}} {\mathop{\vrule width 5pt height 3 pt depth
-2.6pt \kern -6pt \intop}\nolimits_{#1}}}

\def\polhk#1{\setbox0=\hbox{#1}{\ooalign{\hidewidth\lower1.5ex\hbox{`}\hidewidth\crcr\unhbox0}}}

\renewcommand{\div}{\operatorname{div}}

\renewcommand{\div}{\operatorname{div}}

\newtheorem{teo}{Theorem}

\newtheorem{Definition}{Definition}

\newtheorem{Proposition}{Proposition}
\newtheorem{Remark}{Remark}
\newtheorem{Assumption}{A}

\begin{document}

\title{Improved regularity for the porous medium equation along zero level-sets}
\author{Edgard A. Pimentel and Makson S. Santos}

\date{\today} 

\maketitle

\begin{abstract}

\noindent In the present work we establish sharp regularity estimates for the solutions of the porous medium equation, along their zero level-sets. We work under a proximity regime on the exponent governing the nonlinearity of the problem. Then, we prove that solutions are locally of class $\mathcal{C}^{1-,\frac{1}{2}-}$ along free boundary points $x_0\in \partial\left\lbrace u>0\right\rbrace$, both in time and space. Our argument consists of importing information from the heat equation, through approximation and localization methods. 

\noindent 
\medskip

\noindent \textbf{Keywords}: Porous medium equation; Improved regularity at zero level-sets; Asymptotic Lipschitz-continuity; Geometrical tangential analysis.

\medskip 

\noindent \textbf{MSC(2010)}: 35B65; 35J15.
\end{abstract}

\vspace{.1in}

\section{Introduction}\label{introduction}
In this paper we examine an inhomogeneous porous medium equation (PME) of the form 
\begin{equation}\label{eq_main2}
	u_t - \Delta(u^m) = f \;\;\;\;\;\mbox{in}\;\;\;\;\; Q_1,
\end{equation}
where $f\in L^{p,q}(Q_1)$, $m>1$ and $Q_1:=B_1\times(-1,0]$.

We prove new (sharp) regularity results for the solutions to \eqref{eq_main2}. In particular, we establish gains of regularity as solutions approach their zero level-set 
\[
	S_0(u)\,:=\,\left\lbrace(x,t)\,\in\,Q_1\;|\; u(x,t)\,=\,0\right\rbrace.
\]
More precisely, we show that weak solutions to \eqref{eq_main2} are locally of class $\mathcal{C}^{1-}(Q_1)$ along the associated interface. The underlying motivation for the present work is to import regularity from the \textit{homogeneous heat equation} back to the PME, in line with ideas introduced by \textsc{L. Caffarelli} in \cite{caffarelli89}. 

Recently, those intrinsically geometric methods have been developed into a more general analytical framework. Led by the works of \textsc{E. Teixeira}, \textsc{J.-M. Urbano} and their collaborators, the \emph{geometric tangential analysis} has been proven instrumental in unveiling further layers of information on various classes of problems; see \cite{cpimrn}, \cite{edupota}, \cite{pimpoin}, \cite{cpprime}, \cite{argledu}, \cite{pimtei}, \cite{silteix}, \cite{teixurb_anal}, \cite{univmoduli}, just to mention a few references. We also allude to the survey papers \cite{teixeirasurvey}, \cite{pimsan}, \cite{teixurbsurvey} and the references therein. The present work is inspired by those methods and advances them further into the context of the porous medium equation.

The PME appears in various settings, ranging from fluid mechanics to differential geometry. In arbitrary dimensions, it models the flow of a gas through a porous medium. In \eqref{eq_main2}, the unknown $u$ stands for the density of the gas, whereas
\[
	v(x,t)\,:=\,u^{m-1}(x,t)
\]
is the pressure of the fluid. In the context of applications to mathematical disciplines, we highlight the importance of the PME in differential geometry. In fact, when $d=2$ and $m=0$, the equation
\[
	u_t\,-\,\div\left(u^{m-1}Du\right)\,=\,0
\]
describes the Ricci flow on a surface. In addition, in case $d\geq 3$ and the exponent $m$ satisfies
\[
	m\;:=\,\frac{d\,-\,2}{d\,+\,2},
\]
this equation describes the Yamabe flow. For the relation of the PME with the Ricci flow, we refer the reader to \cite{wuricci} and \cite{totiyamabe}; for the connection of this equation with the Yamabe flow, we mention \cite{vazquezyamabe}.

Due to its rich -- though (very) simple -- nonlinear structure, \eqref{eq_main2} has attracted the attention of several authors, working on various aspects of the problem. Important developments have been produced in the literature. These include the unique solvability of \eqref{eq_main2} in the class of weak (distributional) solutions, finite speed of propagation, the study of particular (self-similar) solutions, the Aronson-B\'enilan and Harnack inequalities, among others. For a fairly complete account of the mathematical theory available for \eqref{eq_main2}, we refer the reader to \cite{vazquezmono} and \cite{totikenig}. A detailed discussion on degenerate/singular evolution equations -- including \eqref{eq_main2} -- is put forward in \cite{urbanocute}. In \cite{urbanocute2}, the author presents the method of intrinsic scaling as an approach to access information on the regularity of the solutions to degenerate and singular PDEs; the PME in particular is considered in this monograph.

Among the families of self-similar solutions to \eqref{eq_main2}, we highlight the so-called \emph{Barenblatt} solutions. These are of the form
\begin{equation}\label{eq_barenblatt}
	b(x,t)\,:=\,\frac{1}{t^\alpha}\left(M\,-\,k\frac{|x|^2}{t^\beta}\right)^{\frac{1}{m-1}}_+,
\end{equation}
where
\[
	\alpha\,:=\,\frac{d}{d(m\,-\,1)\,+\,2}\;\;\;\;\;\;\;\;\;\;\mbox{and}\;\;\;\;\;\;\;\;\;\;\beta\,:=\,\frac{1}{d(m\,-\,1)\,+\,2}.
\]
The relevance of the Barenblatt solutions is due in part to the information they imply on the PME. The most remarkable one concerns the emergency of the free boundary 
\[
	\partial\left\lbrace (x,t)\,\in\,Q_1\;|\;u(x,t)\,>\,0\right\rbrace\,\cap\,Q_1,
\]
consequential on the finite speed of propagation in the presence of compactly supported initial conditions. In addition, and of particular interest in the context of this paper, is the H\"older continuity of the solutions.

The regularity of the (weak) solutions to \eqref{eq_main2} is a central topic in the analysis of (degenerate) PDEs. For $m\neq 1$ the diffusivity coefficient $mu^{m-1}$ vanishes along $\mathcal{S}$. Therefore, the equation degenerates along the zero level set. As a consequence, standard results in elliptic regularity theory (e.g., De Giorgi-Nash-Moser) are no longer available. 

In the unidimensional setting, it is known that the pressure function is Lipschitz continuous with respect to the space variable \cite{Ar1}. As a corollary, it follows that solutions are $\alpha$-H\"older continuous, with exponent $\alpha$ given by
\[
	\alpha\,:=\,\min\,\left\lbrace 1,\,\frac{1}{m\,-\,1} \right\rbrace.
\]
The connection between the regularity in space and the regularity in time is the subject of \cite{kruzhkov}; see also \cite{gilding}. Lipschitz regularity of the pressure function with respect to time is established in \cite{dibenedetto} and \cite{ArCa}. Refer also to \cite{benilan}. In \cite{caffried}, the authors obtain improved regularity of the free boundary $\partial\left\lbrace u>0\right\rbrace$, by showing that it is continuously differentiable. 

Regularity for the PME in higher dimensions is firstly studied in \cite{caffried2}. In that work the authors prove the existence of a modulus of continuity for the solutions to \eqref{eq_main2}. Indeed, they establish the existence of $\delta_0>0$ so that
\begin{enumerate}
\item if $d\,=\,2$, $u$ has a modulus of continuity $\omega_2:\mathbb{R}\to\mathbb{R}$ given by
\[
	\omega_2(r)\,:=\,C2^{-C|\ln r|^{1/2}},
\]
for some $C>0$;
\item if $d\,>\,2$, $u$ has a modulus of continuity $\omega_d:\mathbb{R}\to\mathbb{R}$ of the form
\[
	\omega_d(r)\,:=\,C|\ln r|^{-\varepsilon},
\]
for some $C>0$ and $\varepsilon\in(0,2/d)$.
\end{enumerate}

The H\"older regularity of the solutions and the free boundary is the subject of \cite{caffried3}. In that paper, the authors explore the geometry of the interface and prove that it is H\"older continuous. In addition they refine the moduli of continuity obtained in \cite{caffried2} and prove H\"older continuity of the density $u$.

Lipschitz regularity of the free boundary is the object of \cite{cafvazwo}. In that work, the authors prove that the interface is a Lipschitz surface of codimension $1$ in $\mathbb{R}^{d+1}$, for $t>T_0$. Here, $T_0>0$ is related to the support of the initial condition. In \cite{cafwo}, the authors prove the $\mathcal{C}^{1,\alpha}$-regularity of the free boundary, under the same set of assumptions in \cite{cafvazwo}.

The smoothness of the free boundary is established in \cite{daskham}. In that paper, the authors suppose the initial condition for the pressure to satisfy natural conditions when restricted to its compact support. Such conditions are related to upper bounds on the initial pressure and its derivatives up to the second order. In this context, they prove the existence of an instant $T>0$, such that the pressure function and the interface are of class $\mathcal{C}^\infty$, for $0<t<T$. We also refer the reader to \cite{daskham2}.

In \cite{kikova}, the authors assume flatness of the solutions and examine regularity properties of the pressure function and the free boundary. To be more precise, they prove the existence of an instant $\tau>0$ such that solutions are $\mathcal{C}^\infty$-regular in the positivity set and up to the interface for times $t>\tau$. Moreover, they derive $\mathcal{C}^\infty$-regularity of the free boundary after time $\tau$.

Nonlocal variations of the PME are investigated in \cite{bofiva}. In that context, the authors examine a number of important matters regarding the solutions. This range from a priori estimates to Harnack inequalities, including a quantitative analysis of the boundary behavior. In \cite{gianazza}, the authors consider $m\geq 2$ and impose a positivity condition on the average of the solutions. Under those conditions, they prove uniform (local) bounds for the gradient of the pressure function. 

Distinct regularity classes, and their effects on the regularity of the solutions, are examined in \cite{bogelein}. In that paper, the authors establish an equivalence between weak solutions defined under the conditions
\[
	u^m\,\in\,L^2_{loc}(0,T;H^1_{loc}(B_1))
\]
and
\[
	u^\frac{m+1}{2}\,\in\,L^2_{loc}(0,T;H^1_{loc}(B_1)).
\]

In \cite{urbdamma}, the authors examine the optimal regularity of the solutions to \eqref{eq_main2} in terms of the optimal regularity for the homogeneous PME. They argue through a geometric set of techniques, by importing information from the homogeneous counterpart of \eqref{eq_main2}.
%
%

In the present paper we examine the regularity of the solutions to \eqref{eq_main2} \emph{along the degeneracy set}
\[
	S_0(u)\,:=\,\left\lbrace (x,t)\,\in\,Q_1\;|\;u(x,t)\,=\,0\right\rbrace.
\]

The set of techniques comprised by the geometrical tangential methods was proven effective in unveiling improved, sharp, regularity results for the solutions to degenerate/singular PDEs along their critical sets. The pivotal contribution in this direction is reported in \cite{gradsing}. In that paper, the author examines the solutions to an elliptic problem related to
\[
	\div(|Du|^{p-2}Du)\,=\,f,
\]
where $f\in L^q(B_1)$, for $q>d$. It proves optimal regularity of the solutions at points $x_0\in\left\{Du\,=\,0\right\}$. A similar analysis in the context of nonvariational elliptic problem is pursued in \cite{heixeira}. Here, the author establishes Hessian continuity for the solutions to nonvariational elliptic equations at points $x_0\in\left\{D^2u\,=\,0\right\}$. See also \cite{edupota}.

We combine approximation methods with a localization argument and, inspired by the approach set forth in \cite{gradsing} and \cite{heixeira}, establish improved regularity of the solutions \emph{as they approach} $S_0(u)$. Our main result is the following: 
\begin{teo}\label{thm_main}
Let $u$ be a weak solution to \eqref{eq_main2}. Suppose A\ref{assump_source}, to be set forth in Section \ref{sec_mapm}, is in force. Given $\alpha\in(0,1)$ there exists $\varepsilon=\varepsilon(d,\alpha)$ such that, if $0<m-1<\varepsilon$, then $u\in\mathcal{C}^{\alpha,\frac{\alpha}{\sigma}}$ at $(x_0,t_0)\in S_0(u)\cap Q_{1/2}$, where $\sigma>0$ is a constant to be determined later. 

In addition, there exists $C>0$, depending only on $\alpha$ and the dimension $d$, for which
\[
	\sup_{B_r(x_0)\times(t_0-r^\sigma,t_0)} \left|u(x_0,t_0)\,-\,u(x,t)\right|\,\leq\,Cr^\alpha\left(\|u\|_{L^\infty(Q_1)}\,+\,\|f\|_{L^{p,q}(Q_1)}\right),
\]
for $0<r\ll 1/2$.
\end{teo}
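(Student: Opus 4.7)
The plan is to carry out a geometric tangential analysis in the spirit of \cite{gradsing,heixeira}: import regularity from the homogeneous heat equation via a proximity-plus-approximation scheme, and then propagate the resulting one-step improvement through a self-similar iteration that respects the scaling of \eqref{eq_main2}. The proximity regime $0<m-1<\varepsilon$ together with the free-boundary condition $u(x_0,t_0)=0$ is precisely what allows the heat equation to serve as the tangential profile at the scale of interest.

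The key tool is an \textbf{approximation lemma}: there exists $\delta=\delta(d,\alpha)>0$ such that whenever $\|u\|_{L^\infty(Q_1)}\leq 1$, $\|f\|_{L^{p,q}(Q_1)}\leq \delta$, and $0<m-1\leq \delta$, one can find a caloric function $h$ in $Q_{3/4}$ satisfying
\begin{equation*}
\|u-h\|_{L^\infty(Q_{1/2})} \;\leq\; \eta,
\end{equation*}
for any prescribed $\eta>0$. I would prove this by a standard compactness/contradiction argument: along a sequence $u_k$ of PME solutions with $m_k\to 1^+$ and $\|f_k\|_{L^{p,q}}\to 0$, the H\"older equicontinuity estimates of \cite{caffried2,caffried3} yield a uniformly convergent subsequence, and passing to the limit in the nonlinear term $\Delta(u_k^{m_k})$ identifies the limit as a distributional (hence classical) solution of the heat equation.

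Once the lemma is in hand, the next step is \textbf{a one-step oscillation decay} at the free-boundary point. After translating $(x_0,t_0)$ to the origin and normalizing, we may take $u(0,0)=0$, $\|u\|_{L^\infty(Q_1)}\leq 1$, and $\|f\|_{L^{p,q}(Q_1)}\leq \delta$. Since $h$ is smooth, $|h(0,0)|\leq \eta$, and its gradient is bounded by a universal $C_0$ on $Q_{1/2}$, one obtains
\begin{equation*}
\sup_{B_\rho\times(-\rho^\sigma,0)}|u| \;\leq\; 2\eta + C_0\rho^{\sigma/2},
\end{equation*}
for every $\rho\in(0,1/2)$. Given $\alpha\in(0,1)$, I fix $\rho=\rho(\alpha)$ so small that $C_0\rho^{\sigma/2}\leq \tfrac14\rho^{\alpha}$, which is feasible provided $\sigma/2>\alpha$; this last inequality determines the threshold $\varepsilon$ on $m-1$ through the relation $\sigma=2-\alpha(m-1)$. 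Choosing also $\eta=\tfrac14\rho^{\alpha}$ fixes $\delta=\delta(d,\alpha)$ via the approximation lemma and yields the basic step $\sup_{B_\rho\times(-\rho^\sigma,0)}|u|\leq \rho^{\alpha}$.

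The final step is the \textbf{scaled iteration}. Setting
\begin{equation*}
u_1(x,t):=\rho^{-\alpha}\,u(\rho x,\rho^\sigma t), \qquad \sigma:=2-\alpha(m-1),
\end{equation*}
a direct computation shows that $u_1$ solves the PME in $Q_1$ with source $f_1(x,t)=\rho^{\sigma-\alpha}f(\rho x,\rho^\sigma t)$; the exponent $\sigma$ is exactly the one rendering \eqref{eq_main2} covariant under this anisotropic rescaling. Assumption A\ref{assump_source} provides the calibration that guarantees $\|f_1\|_{L^{p,q}(Q_1)}\leq \rho^{\kappa}\|f\|_{L^{p,q}(Q_1)}$ for some $\kappa>0$, so that smallness of the source strictly improves under rescaling. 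Since $u_1(0,0)=0$ and $\|u_1\|_{L^\infty(Q_1)}\leq 1$, the hypotheses replicate themselves and an induction yields $\sup_{B_{\rho^k}\times(-\rho^{k\sigma},0)}|u|\leq \rho^{k\alpha}$ for every $k$; interpolating between consecutive scales $\rho^{k+1}\leq r<\rho^k$ produces the $\mathcal{C}^{\alpha,\alpha/\sigma}$-estimate of the statement, with the constant picking up the normalizing factor $\|u\|_{L^\infty(Q_1)}+\|f\|_{L^{p,q}(Q_1)}$. The main obstacle is the approximation lemma itself, and specifically the passage to the limit in $\Delta(u_k^{m_k})$ as $m_k\to 1^+$: one needs moduli of continuity that neither degenerate in the limit nor blow up across the zero level-set $S_0(u_k)$, and this is precisely what forces the proximity regime on $m-1$.
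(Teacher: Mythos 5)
Your proposal follows essentially the same route as the paper: a caloric approximation lemma established by compactness (Proposition \ref{prop_approx}), a one-step oscillation decay using the approximating function $h$ at the free-boundary point (Proposition \ref{prop_step1}), a $\sigma$-intrinsically scaled iteration whose source-term smallness is preserved by Assumption A\ref{assump_source} (Proposition \ref{prop_int}), and a discrete-to-continuous interpolation. The only deviations are cosmetic --- you phrase the one-step decay directly on the $\sigma$-scaled cylinder (which makes the mild constraint $\sigma/2>\alpha$ explicit), and you replace the paper's exact normalization $h(x_0,t_0)=0$ by the inequality $|h(x_0,t_0)|\leq\eta$, which suffices equally well.
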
	

\begin{Remark}[Continuity of the regularity regime]
The constant $\sigma>0$ appearing in Theorem \ref{thm_main} is such that $\sigma\to 2$ as $m\to 1$. Hence, our result unveils a \emph{continuity} of the regularity regime for the PME equation with respect to the exponent $m$, in the sense it recovers the regularity available for the limiting profile, i.e., the heat equation. 
\end{Remark}

Although solutions to \eqref{eq_main2} are only $\mathcal{C}^\beta$-continuous in $Q_1$, Theorem \eqref{thm_main} ensures they \emph{land at $S_0(u)$ as almost Lipschitz-continuous functions}. Put differently, even if $u\in\mathcal{C}^\beta_{loc}(Q_1)$ with a very small exponent $0<\beta \ll 1/2$, the regularity regime switches to $\mathcal{C}^{1-}$ at the points where the density vanishes. A noticeable aspect of the theorem relies on the fact that improved regularity takes place along the set where no information from the PDE is available.

The proof of Theorem \ref{thm_main} is based on approximation methods, combined with a scaling argument. In fact, the approximation regime allows us to transmit information from the heat equation back to the PME. It translates into an oscillation control in cylinders of a universal, fixed, radius. Then, the scaling procedure localizes the oscillation estimate, establishing the result.  

The remainder of this paper is organized as follows. In Section \ref{sec_mapm} we detail our assumptions and collect some preliminaries. We prove a caloric approximation result in Section \ref{sec_caloric}. The proof of Theorem \ref{thm_main} is the subject of Section \ref{sec_proofmt}.

\section{A few preliminaries and the set up of the problem}\label{sec_mapm}


Throughout this paper we work under specific conditions on the source term $f$. We detail the anisotropic Lebesgue space to which the source term $f$ belongs.

\begin{Assumption}[Integrability of the source term $f$]\label{assump_source}
The source term $f : Q_1 \rightarrow \mathbb{R}$ is such that $f \in L^{p,q}(Q_1)$. In addition,
\[
	\left\|f\right\|_{L^{p,q}(Q_1)}\,:=\,\left(\int_{-1}^0\left|\int_{B_1}\left|f(x,t)\right|^pdx\right|^{q/p}dt\right)^{1/q} \leq C,
\]
where $C$ is a positive constant,
\begin{equation}\label{eq_intconditions}
	p\,>\,\frac{d}{2-m}\;\;\;\;\;\;\;\;\mbox{and}\;\;\;\;\;\;\;\;q\,\geq\,\frac{2(3\,-\,m)p}{(2\,-\,m)p\,-\,d}.
\end{equation}
\end{Assumption}

The conditions on the exponents $p$ and $q$ ensure that important quantities appearing further in the paper are non-negative. Namely,
\begin{equation}\label{eq_noneg1}
	\sigma\,:=\,2\,+\,(1\,-\,m)\alpha\,\geq\,0,
\end{equation}
for every $\alpha\in(0,1)$, and
\begin{equation}\label{eq_noneg2}
	\frac{\sigma m\,-\,2}{m\,-\,1}\,-\,\frac{d}{p}\,-\,\frac{\sigma}{q}\,\geq\,0.
\end{equation}
The quantity in \eqref{eq_noneg1} is related to the intrinsic scaling of the PME and encodes the H\"older regularity of the solutions with respect to $t$. On the other hand, the inequality in \eqref{eq_noneg2} arises in the context of a localization argument. Were $f\in L^\infty(Q_1)$, \eqref{eq_noneg2} could be disregarded. 

\begin{Remark}[Integrability conditions]
The lower bound for $q$ in \eqref{eq_intconditions} could be altered to
\begin{equation}\label{eq_condgamma}
	q\,\geq\,\frac{(1\,+\,\gamma)(3\,-\,m)p}{(2\,-\,m)p\,-\,d},
\end{equation}
for any $\gamma>0$. In fact, for any $\gamma>0$, condition \eqref{eq_condgamma} implies 
\[
	\frac{d}{p}\,+\,\frac{3\,-\,m}{q}\,<\,2\,-\,m.
\]
In the limit $m\to 1$, the former inequality yields
\[
	\frac{d}{p}\,+\,\frac{2}{q}\,<\,1,
\]
which is the condition found in \cite{jvteix} to ensure $\alpha$-H\"older continuity for solutions to parabolic fully nonlinear heat equation.
\end{Remark}


To make matters precise, we proceed with the definition of weak (distributional) solution to \eqref{eq_main2}.

\begin{Definition}[Weak solution]\label{def_solution}
A function $u\in L^\infty_{loc}(0,T;L^{m+1}_{loc}(B_1))$ with $u^m\in L^2_{loc}(0,T,W^{1,2}_{loc}(B_1))$ is said to be a weak (distributional) solution to \eqref{eq_main2} if
\[
	\int_0^T\int_{B_1}-u\phi_t\,+\,D(u^m)\cdot D\phi dxdt\,=\,\int_0^T\int_{B_1}f\phi dxdt,
\]
for every $\phi\in\mathcal{C}^\infty_0(Q_1)$. A normalized weak solution to \eqref{eq_main2} is a weak solution that satisfies
\[
	\left\|u\right\|_{L^\infty(Q_1)}\,\leq\,1.
\]
\end{Definition}

In \cite{urbdamma} the authors propose an alternative definition, which is tantamount to a third formulation involving the Steklov averages of $u$. The reason for this alternative approach lies in the use of a Caccioppoli estimate. We notice that our arguments by-pass the use of such inequality. Therefore, Definition \ref{def_solution} is reasonable in the context of the present work.


As mentioned before, we argue through techniques in the toolbox of geometrical tangential analysis. As mentioned before, the main elements behind this approach are stability of the solutions, compactness and scaling properties. In what follows we address the PME in light of those aspects. We begin with a proposition concerning the sequential stability of the solutions to \eqref{eq_main2}. 

\begin{Proposition}[Sequential stability]\label{prop_seqstab}
Consider the sequences $$(m_n)_{n\in\mathbb{N}}\subset(1,2),$$ $$(f_n)_{n\in\mathbb{N}}\subset L^{p,q}(Q_1)$$ and $$(u_n)_{n\in\mathbb{N}}\subset L^1_{loc}(Q_1).$$ Suppose
\[
	\left(u_n^{(m_n)}\right)_{n\in\mathbb{N}}\subset L^1_{loc}(0,T;W^{1,1}_{loc}(B_1)).
\]
Suppose further that $(u_n)_{n\in\mathbb{N}}$ solves
\[
(u_n)_t - \Delta(u_n^{m_n}) = f_n \;\;\;\text{in}\;\;\; Q_1.
\]
and
\[
	|m_n \,-\, 1| \,+ \,\|f_n\|_{L^{p,q}(Q_1)} \,\longrightarrow\, 0.
\]
If there exists $u_\infty \in L^1_{loc}(0,T;W^{1,1}(Q_1))\cap L^{\infty}(Q_1)$ such that
\[
	\left\|u_n\,-\,u_\infty\right\|_{L^\infty_{loc}(Q_{9/10})} \,\longrightarrow \,0,
\]
then $u_\infty$ solves
\[
	(u_\infty)_t\,-\,\Delta u_\infty\,=\,0\;\;\;\;\;\mbox{in}\;\;\;\;\;Q_{9/10}.
\]
\end{Proposition}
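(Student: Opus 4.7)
The plan is to pass to the limit directly in the weak formulation, transferring all derivatives onto the test function so that only pointwise convergence of $u_n^{m_n}$ is required. Fix $\phi\in\mathcal{C}^\infty_0(Q_{9/10})$. Starting from the definition of weak solution for $u_n$, I would integrate by parts in space (legitimate since $\phi$ is compactly supported and $u_n^{m_n}\in L^2_{loc}(0,T;W^{1,2}_{loc})$) to rewrite
\[
\int_{Q_1}\bigl(-u_n\phi_t\,+\,D(u_n^{m_n})\cdot D\phi\bigr)\,dx\,dt\,=\,\int_{Q_1}\bigl(-u_n\phi_t\,-\,u_n^{m_n}\Delta\phi\bigr)\,dx\,dt\,=\,\int_{Q_1} f_n\phi\,dx\,dt.
\]

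Next I would address the three resulting limits. The first term converges by uniform convergence: $u_n\to u_\infty$ in $L^\infty(\supp\phi)$ yields $\int u_n\phi_t\to\int u_\infty\phi_t$. For the source term, Hölder's inequality in the anisotropic Lebesgue space gives
\[
\left|\int_{Q_1} f_n\phi\,dx\,dt\right|\,\leq\,\|f_n\|_{L^{p,q}(Q_1)}\|\phi\|_{L^{p',q'}(Q_1)}\,\longrightarrow\,0,
\]
by assumption. The nontrivial term is $\int u_n^{m_n}\Delta\phi$. Since $u_\infty\in L^\infty(Q_1)$ and the convergence is uniform on $\supp\phi\subset Q_{9/10}$, there exists $M>0$ such that $0\leq u_n\leq M$ on $\supp\phi$ for $n$ large. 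I would then estimate
\[
|u_n^{m_n}\,-\,u_\infty|\,\leq\,|u_n^{m_n}\,-\,u_n|\,+\,|u_n\,-\,u_\infty|,
\]
and bound the first difference by noting that $s\mapsto s^{m_n}$ converges to the identity uniformly on $[0,M]$ as $m_n\to 1$ (the only delicate point being $s\to 0^+$, where both $s^{m_n}$ and $s$ tend to zero). Hence $u_n^{m_n}\to u_\infty$ uniformly on $\supp\phi$, and $\int u_n^{m_n}\Delta\phi\to\int u_\infty\Delta\phi$.

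Passing to the limit, I obtain
\[
\int_{Q_{9/10}}\bigl(-u_\infty\phi_t\,-\,u_\infty\Delta\phi\bigr)\,dx\,dt\,=\,0\qquad\forall\,\phi\in\mathcal{C}^\infty_0(Q_{9/10}).
\]
Finally, because $u_\infty\in L^1_{loc}(0,T;W^{1,1}(Q_1))$, an integration by parts in space gives $-\int u_\infty\Delta\phi=\int Du_\infty\cdot D\phi$, which is precisely the distributional formulation of $(u_\infty)_t-\Delta u_\infty=0$ in $Q_{9/10}$.

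The one step deserving care is the uniform convergence $u_n^{m_n}\to u_\infty$, which is the sole place where the combined hypotheses $m_n\to 1$, $\|u_n-u_\infty\|_{L^\infty_{loc}(Q_{9/10})}\to 0$ and $u_\infty\in L^\infty$ are used in tandem; the behavior near $\{u_n=0\}$ must be checked separately, but is automatic since $s^{m_n}\to 0$ as $s\to 0^+$ uniformly in $n$. All other passages to the limit are standard consequences of the $L^{p,q}$ integrability of $f$ and the smoothness of $\phi$.
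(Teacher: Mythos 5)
Your proposal is correct and takes essentially the same approach as the paper: integrate by parts to move the Laplacian onto the test function and pass to the limit term by term in the weak formulation. You simply fill in the details the paper leaves implicit, most notably the uniform convergence $u_n^{m_n}\to u_\infty$ on the support of $\phi$ (via $|s^{m_n}-s|\to 0$ uniformly on $[0,M]$), which is the genuinely delicate point and which you handle correctly.
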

\begin{proof}
Let $\varphi \in \mathcal{C}^{\infty}_0(Q_1)$. We have
\begin{align*}
	\left|\int_{0}^T\int_{B_1}-u_\infty\varphi_t+Du_\infty\cdot D\varphi \,\mbox{d}x\mbox{d}t\right|&\leq\int_0^T\int_{B_1}\left|\varphi_t\right|\left|u_n-u_\infty\right|\,\mbox{d}x\mbox{d}t\\
		&\quad+\int_0^T\int_{B_1}\left|D^2\varphi\right|\left|u_n^{m_n}-u_\infty\right|\,\mbox{d}x\mbox{d}t\\
		&\quad+\int_0^T\int_{B_1}\left|f_n\varphi\right|\,\mbox{d}x\mbox{d}t.
\end{align*}
Notice that the right-hand side converges to zero as $n \to 0$. Therefore
\[
	\int_{Q_1}\left(u_\infty\varphi_t - Du_\infty D\varphi \right)\,\mbox{d}x\mbox{d}t = 0
\]
and the proof is complete.
\end{proof}

We continue by recalling a result on the compactness of the solutions. Indeed, the subsequent proposition accounts for the H\"older continuity of the solutions to \eqref{eq_main2} both in time and in space. It first appeared in the work of DiBenedetto and Friedman \cite{benfri}.

\begin{Proposition}[Compactness of the solutions]\label{prop_compac}
Let $u$ be a weak solution to \eqref{eq_main2}. Then $u\in\mathcal{C}^{\beta,\frac{\beta}{2}}_{loc}(Q_1)$, for some $\beta \in (0,1)$. In addition, there exists $C>0$, depending only on $\left\|u\right\|_{L^\infty(Q_1)}$ and $\left\|f\right\|_{L^{p,q}(Q_1)}$, such that
\[
	\left|u(x,t)\,-\,u(y,s)\right| \,\leq \,C\left(|x\,-\,y|^\beta\,+\,|t\,-\,s|^\frac{\beta}{2}\right).
\]
\end{Proposition}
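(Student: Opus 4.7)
My plan is to reproduce, in outline, the classical argument due to DiBenedetto and Friedman \cite{benfri}, adapted to accommodate the anisotropic source $f \in L^{p,q}(Q_1)$. The essential mechanism is the method of \emph{intrinsic scaling}: since the diffusion coefficient $m u^{m-1}$ degenerates where $u = 0$, one cannot work in standard parabolic cylinders $B_\rho \times (t_0 - \rho^2, t_0)$. Instead, one balances the geometry of the cylinders against the size of the solution within them, considering cylinders of the form
\[
Q_\rho^\theta(x_0,t_0)\,=\,B_\rho(x_0)\times\bigl(t_0-\theta\rho^2,\,t_0\bigr),
\]
with $\theta\sim\omega^{1-m}$, where $\omega$ denotes the essential oscillation of $u$ in the cylinder. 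The outcome of the iteration is Hölder continuity with exponent $\beta$ in space and $\beta/2$ in time, since the intrinsic scaling collapses to the parabolic one at the cost of a reduced exponent.

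The first step is to secure local $L^\infty$-bounds by a De Giorgi / Moser iteration on the truncations $(u-k)_\pm$, starting from the Caccioppoli-type energy inequality obtained by testing the equation against $\eta^2 (u-k)_\pm^m$ for a suitable cutoff $\eta$. The integrability conditions \eqref{eq_intconditions} are designed precisely so that this iteration closes: the contribution of the source $f$ is absorbed via Hölder's inequality in the anisotropic space $L^{p,q}$, together with the parabolic Sobolev embedding.

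Next, within a reference intrinsic cylinder $Q_\rho^\theta$ one establishes the oscillation decay
\[
\osc_{Q_{\rho/2}^\theta} u \,\leq\, \lambda\,\osc_{Q_\rho^\theta} u,
\]
for some universal $\lambda\in(0,1)$. The argument proceeds by a dichotomy: either the ``mass'' of $\mu^+-u$ (or $u-\mu^-$) in $Q_\rho^\theta$ is quantitatively small, in which case a De Giorgi-type lemma applies, or it is not, in which case one passes to a subcylinder where $u$ stays uniformly bounded away from zero and the equation is \emph{uniformly parabolic}, so that classical logarithmic energy estimates apply. Iterating the decay on nested intrinsic cylinders produces spatial Hölder continuity with exponent $\beta\in(0,1)$.

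Finally, time regularity with exponent $\beta/2$ is obtained either by an interior integral identity (testing \eqref{eq_main2} against a mollifier of the indicator of a time interval, and then using spatial Hölder continuity to control $\Delta(u^m)$) or, equivalently, by translating the spatial oscillation decay via the intrinsic scaling relation $\tau\sim\omega^{1-m}\rho^2$. The hardest part of the argument is the bookkeeping imposed by the intrinsic geometry: one must verify that every estimate remains invariant under the rescaling $u\mapsto u/\omega$, $(x,t)\mapsto (x/\rho,\,t/(\omega^{1-m}\rho^2))$, and that the rescaled source term continues to satisfy an admissible integrability bound. This is exactly the role played by the numerological conditions \eqref{eq_noneg1}--\eqref{eq_noneg2}, which reappear when one localizes and rescales the estimates around a point of $S_0(u)$.
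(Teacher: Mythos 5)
Your proposal is consistent with what the paper does: the paper itself gives no proof of Proposition~\ref{prop_compac} and simply refers the reader to DiBenedetto--Friedman \cite{benfri}, and your outline is a faithful high-level summary of their intrinsic-scaling argument (intrinsic cylinders with $\theta\sim\omega^{1-m}$, De~Giorgi/Moser $L^\infty$ bounds from a Caccioppoli inequality, oscillation decay via the alternatives, time regularity from the intrinsic geometry). One small clarification: the numerological conditions \eqref{eq_noneg1}--\eqref{eq_noneg2} are invoked in the paper for the localization and iteration in Proposition~\ref{prop_int}, not for the basic H\"older estimate of Proposition~\ref{prop_compac}; what the latter requires is only that $p,q$ be large enough for the $f$-term to close the energy/iteration estimates, which the conditions in A\ref{assump_source} certainly guarantee.
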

For a proof of this proposition, we refer the reader to \cite{benfri}. We end this section by commenting on the scaling properties of the inhomogeneous porous medium equation. It is well known that if $u$ solves \eqref{eq_main2}, the function $v$ defined as
\begin{equation}\label{scal_func}
v(x,t) = \dfrac{u(x_0 + ax, t_0 + bt)}{\gamma},
\end{equation}
is itself a solution to \eqref{eq_main2} if and only if 
\begin{equation}\label{scal_prop}
\gamma = \left(\dfrac{a^2}{b}\right)^{\frac{1}{m-1}}.
\end{equation}
The proof of this fact is straightforward and we omit it here. By reasoning along the same lines, it is easy to see that if $u$ solves \eqref{eq_main2} and if $v$ and $\gamma$ are defined as in \eqref{scal_func} and \eqref{scal_prop}, then $v$ solves
\begin{equation}\label{scal_eq}
v_t - \Delta(v^m) = \dfrac{b^{\frac{m}{m-1}}}{a^{\frac{2}{m-1}}}f(x_0 + ax, t_0 + bt).
\end{equation}

Finally, we observe that imposing a smallness regime on the $L^{p,q}$-norm of $f$ or requiring $u$ to be a normalized solution accounts for no additional constraint on the problem. In fact, consider 
\[
	v\,:= \,\dfrac{u\left(\left(\frac{\|f\|_{L^{p,q}(Q_1)} + \|u\|_{L^\infty(Q_1)}}{\varepsilon}\right)^{\frac{m-1}{2}}x,t\right)}{\left(\frac{\|f\|_{L^{p,q}(Q_1)} + \|u\|_{L^\infty(Q_1)}}{\varepsilon}\right)}.
\]
Then $v$ is such that $\left\|v\right\|_{L^\infty(Q_1)}<1$ and satisfies
\[
	v_t\,-\,\Delta(v^m)\,=\,\frac{\varepsilon f}{\|f\|_{L^{p,q}(Q_1)}\,+\,\|u\|_{L^\infty(Q_1)}} \,=: \,{\tilde f},
\]
with $\|{\tilde f}\|_{L^{p,q}(Q_1)} < \varepsilon$. In the next section we present the proof of Theorem \ref{thm_main}.

\section{Caloric approximation for the solutions}\label{sec_caloric}

Improved regularity along the critical set $\left\lbrace u\,=\,0\right\rbrace$ relies on a finer approximation lemma. Indeed, this type of result ensures the existence of an auxiliary function $h$ approximating the solutions, with higher levels of regularity. 

Heuristically, we aim at designing a Taylor expansion for the solutions, importing regularity from $h$. In our concrete case, it is critical that $h$ vanishes at the points $(x,t)\in S_0(u)$. That is, we must guarantee 
\[
	\left\lbrace(x,t)\in Q_1\;|\; u(x,t)\,=\,0\right\rbrace\,\subset\,\left\lbrace(x,t)\in Q_1\;|\; h(x,t)\,=\,0\right\rbrace,
\]
where $h$ is the approximating function.

\begin{Proposition}[Zero level-set approximation lemma]\label{prop_approx}
Let $u$ be a weak solution to \eqref{eq_main2}. Suppose A\ref{assump_source} is in force. Suppose further that $(x_0,t_0) \in S_0(u)$. Then, given $\delta > 0$, there exists $\varepsilon>0$ such that if 
\[
	m\,-\,1 \,+ \,\|f\|_{L^{p,q}(Q_1)} \,<\,\varepsilon,
\]
one can find $h \in \mathcal{C}^{1,\frac{1}{2}}(Q_1)$ satisfying
\[
	\left\|u\,-\,h\right\|_{L^{\infty}(Q_{9/10})}\,\leq\,\delta,
\]
with $h(x_0,t_0) \,= \,0$
\end{Proposition}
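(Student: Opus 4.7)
The plan is to argue by contradiction, leveraging the two tools already built up in Section \ref{sec_mapm}: the uniform Hölder compactness from Proposition \ref{prop_compac} and the sequential stability from Proposition \ref{prop_seqstab}. The idea is that, in the proximity regime $m\to 1$ with $\|f\|_{L^{p,q}}\to 0$, solutions must be close to caloric profiles; since caloric functions are smooth, they will furnish the required approximator $h$, and a minor translation will enforce the vanishing at $(x_0,t_0)$.

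First, assuming the conclusion fails, I would produce $\delta_0>0$ and sequences $(m_n)_n\subset(1,2)$, $(f_n)_n\subset L^{p,q}(Q_1)$, normalized weak solutions $(u_n)_n$ to $(u_n)_t-\Delta(u_n^{m_n})=f_n$, together with points $(x_n,t_n)\in S_0(u_n)\cap Q_1$, such that $m_n-1+\|f_n\|_{L^{p,q}(Q_1)}\to 0$ but for every $h\in\mathcal{C}^{1,\frac{1}{2}}(Q_1)$ with $h(x_n,t_n)=0$ one has $\|u_n-h\|_{L^\infty(Q_{9/10})}>\delta_0$. Proposition \ref{prop_compac} then provides uniform $\mathcal{C}^{\beta,\beta/2}_{loc}$ estimates, so by Arzelà-Ascoli, up to a subsequence, $u_n\to u_\infty$ uniformly on $Q_{9/10}$ and $(x_n,t_n)\to(x_\infty,t_\infty)$.

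Next, I would invoke Proposition \ref{prop_seqstab} to conclude that $u_\infty$ solves the homogeneous heat equation in $Q_{9/10}$; by classical parabolic theory $u_\infty$ is smooth there, hence in $\mathcal{C}^{1,\frac{1}{2}}(Q_{9/10})$. Uniform convergence together with $u_n(x_n,t_n)=0$ yields $u_\infty(x_\infty,t_\infty)=0$. Setting
\[
h_n(x,t)\,:=\,u_\infty(x,t)\,-\,u_\infty(x_n,t_n),
\]
the function $h_n$ is itself $\mathcal{C}^{1,\frac{1}{2}}$ and satisfies $h_n(x_n,t_n)=0$. A triangle inequality gives
\[
\|u_n-h_n\|_{L^\infty(Q_{9/10})}\,\leq\,\|u_n-u_\infty\|_{L^\infty(Q_{9/10})}\,+\,|u_\infty(x_n,t_n)|,
\]
and both terms vanish in the limit (the first by uniform convergence, the second by continuity of $u_\infty$ at $(x_\infty,t_\infty)$). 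This contradicts the assumed lower bound $\delta_0$, closing the argument.

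The main delicate point I anticipate is ensuring that the Hölder exponent $\beta$ and the compactness constant delivered by Proposition \ref{prop_compac} can be chosen \emph{uniformly} along the sequence as $m_n\to 1$; this requires checking that the bound depends only on $\|u_n\|_{L^\infty(Q_1)}\leq 1$ and $\|f_n\|_{L^{p,q}(Q_1)}\leq 1$, and not degenerately on $m_n-1$. The normalization reductions at the end of Section \ref{sec_mapm} (smallness of $f$ and boundedness of $u$ absorbed into a scaling) make this plausible, and the integrability conditions \eqref{eq_intconditions} together with \eqref{eq_noneg2} should be exactly what keeps the estimates stable across the proximity regime. Everything else in the argument is a soft compactness-and-stability routine.
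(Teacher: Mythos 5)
Your argument is correct and follows essentially the same strategy as the paper: contradiction via the uniform H\"older compactness of Proposition \ref{prop_compac}, sequential stability from Proposition \ref{prop_seqstab} to identify the limit as caloric, and then taking the caloric limit as the approximator $h$. The only difference is that you carefully track a sequence of base points $(x_n,t_n)$ and normalize $h_n:=u_\infty-u_\infty(x_n,t_n)$, whereas the paper keeps $(x_0,t_0)$ fixed across the contradicting sequence; your version is slightly more scrupulous about the negation of the statement but the substance is identical.
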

\begin{proof}
We argue by contradiction. Suppose the statement of the proposition is false. In this case, there exist $\delta_0$ and sequences $(u_n)_{n\in\mathbb{N}}$, $(m_n)_{n\in\mathbb{N}}\subset (1,2)$ and $(f_n)_{n\in\mathbb{N}}$, such that
\[
	(m_n-1)\, +\, \left\|f_n\right\|_{L^{p,q}(Q_1)}\,\leq\,\frac{1}{n}
\]
and
\begin{equation}\label{eq_seq}
	(u_n)_t\,-\,\Delta(u^m_n) \,= \,f_n\;\;\;\;\;\mbox{in}\;\;\;\;\;Q_1,
\end{equation}
with
\[
\|u_n-h\|_{L^{\infty}(Q_1)} > \delta_0\;\;\;\;\;\;\;\;\;\;\mbox{or}\;\;\;\;\;\;\;\;\;\;h(x_0,t_0)\,\neq\,0
\]
for all $h\in C^{2,1}(Q_1)$ and $n \in \mathbb{N}$. Proposition \ref{prop_compac} implies that $(u_n)_{n\in\mathbb{N}}$ is uniformly bounded in $C^{\beta,\frac{\beta}{2}}(Q_1)$, for some $\beta \in (0,1)$. Therefore, $u_n \rightarrow u_\infty$ locally in $C^{\gamma,\frac{\gamma}{2}}(Q_1)$ through a subsequence, if necessary, for every $0<\gamma<\beta$. Here we evoke the sequential stability of weak solutions, Proposition \ref{prop_seqstab}, to conclude that $u_\infty$ solves
\[
	(u_\infty)_t\,-\,\Delta(u_{\infty}) \,= \,0 \;\;\;\;\;\mbox{in}\;\;\;\;\;Q_{9/10}.
\]
Standard regularity results available for the heat equation ensure that $u_\infty \in \mathcal{C}^{1,\frac{1}{2}}(Q_1)$. In addition, as a consequence of the uniform convergence, $u_\infty(x_0,t_0)=0$. Now, by taking $h\equiv u_\infty$, we obtain a contradiction and complete the proof.
\end{proof}

In the sequel we detail the proof of Theorem \ref{thm_main}.

\section{Improved regularity along singular sets}\label{sec_proofmt}

We start by controlling the oscillation of the solutions to \eqref{eq_main2} within a ball of radius $0<\rho\ll1/2$, to be (universally) determined.

\begin{Proposition}\label{prop_step1}
Let $u$ be a weak solution to \eqref{eq_main2}. Suppose that A\ref{assump_source} is in force. Suppose further that  $(x_0,t_0) \in S_0(u)$. Then, given $\alpha \in (0,1)$, there exists $\varepsilon>0$ such that, if
\[
	(m\,-\,1)\,+\,\left\|f\right\|_{L^{p,q}(Q_1)}\,<\,\varepsilon
\]
one can find a constant $0<\rho\ll1/2$ for which
\[
	\sup_{Q_{\rho}(x_0,t_0)} |u(x,t)|\,\leq \,\rho^\alpha.
\]
\end{Proposition}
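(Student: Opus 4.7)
The plan is to combine the caloric approximation result of Proposition~\ref{prop_approx} with the $\mathcal{C}^{1,\frac{1}{2}}$-regularity of the caloric approximant, using the triangle inequality and an appropriate tuning of the parameters. Heuristically, since the heat equation forces any solution vanishing at $(x_0,t_0)$ to grow at most linearly in parabolic distance, this linear growth beats the weaker $\rho^\alpha$ growth for $\alpha<1$, provided we work at a sufficiently small scale. The error incurred by switching from the PME solution $u$ to the caloric approximant $h$ can be absorbed by choosing $\delta$ small enough.

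First I would fix $\alpha\in(0,1)$ and apply Proposition~\ref{prop_approx} with a free parameter $\delta>0$ still to be selected. This produces $h\in\mathcal{C}^{1,\frac{1}{2}}(Q_{9/10})$ with $h(x_0,t_0)=0$ and $\|u-h\|_{L^\infty(Q_{9/10})}\leq\delta$, provided $(m-1)+\|f\|_{L^{p,q}(Q_1)}<\varepsilon$, with $\varepsilon=\varepsilon(\delta)$. Because $h$ is $\mathcal{C}^{1,\frac{1}{2}}$ and vanishes at $(x_0,t_0)$, there exists a universal constant $C_0>0$ (depending only on $d$, coming from standard heat-equation estimates applied to the limiting profile, whose $L^\infty$-norm is controlled by the normalization $\|u\|_{L^\infty(Q_1)}\leq 1$ transmitted through the approximation) such that
\[
\sup_{Q_\rho(x_0,t_0)}|h(x,t)|\,\leq\,C_0\,\rho
\]
for every sufficiently small $\rho$. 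The triangle inequality then yields
\[
\sup_{Q_\rho(x_0,t_0)}|u(x,t)|\,\leq\,\delta\,+\,C_0\,\rho.
\]

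Next I would choose $\rho$ and $\delta$ in the correct order. Since $\alpha<1$, pick $\rho\in(0,1/10)$ depending only on $\alpha$ and $C_0$ so that $C_0\rho\leq\tfrac{1}{2}\rho^\alpha$, then set $\delta:=\tfrac{1}{2}\rho^\alpha$. With these choices the displayed estimate reads $\sup_{Q_\rho(x_0,t_0)}|u|\leq\rho^\alpha$, and Proposition~\ref{prop_approx} supplies the corresponding $\varepsilon=\varepsilon(\alpha,d)$.

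The main subtlety lies in the order of the parameter choices: one must first fix $\rho$ from the linear growth of $h$, \emph{then} determine $\delta$ from $\rho$, and only afterwards read off $\varepsilon$ from the approximation lemma; reversing this order would create a circular dependency. There is no genuinely hard calculation here, because the heavy lifting has already been absorbed into Proposition~\ref{prop_approx}. The only point requiring some care is verifying that the constant $C_0$ is truly universal, which follows from the fact that the caloric estimate is applied to the limit profile $u_\infty$ arising in the compactness argument, whose $L^\infty$-norm is bounded independently of $u$ under the normalization hypothesis discussed at the end of Section~\ref{sec_mapm}.
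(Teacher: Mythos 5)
Your proposal is correct and follows essentially the same argument as the paper: invoke Proposition~\ref{prop_approx} to obtain a caloric approximant $h$ vanishing at $(x_0,t_0)$, use its Lipschitz growth to bound $\sup_{Q_\rho}|h|$ by $C\rho$, apply the triangle inequality, and then tune $\rho$ (so $C\rho\leq\tfrac12\rho^\alpha$) before setting $\delta=\tfrac12\rho^\alpha$. Your remark about fixing $\rho$ first, then $\delta$, then $\varepsilon$ is exactly the ordering the paper uses implicitly, and is the right way to avoid circularity.
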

\begin{proof}
By Proposition \ref{prop_approx}, there exists $h \in C^{2,1}(Q_1)$ such that for all $\delta >0$
\[
\|u-h\|_{L^{\infty}(Q_1)} \leq \delta,
\]
with $h(x_0,t_0)$ = 0. We get
\begin{equation}\label{eq_hconst}
\sup_{Q_{\rho}(x_0,t_0)}|h(x,t) - h(x_0,t_0)| \leq C\rho.
\end{equation}
Thus,
\[
\begin{array}{rcl}
\displaystyle \sup_{Q_{\rho}(x_0,t_0)}|u(x,t)| & = & \displaystyle \sup_{Q_{\rho}(x_0,t_0)}|u(x,t) - h(x,t) + h(x,t)| \vspace{0.1cm}\\
 & \leq & \displaystyle \sup_{Q_{\rho}(x_0,t_0)}|u(x,t) - h(x,t)| \vspace{0,1cm}\\
 & & \displaystyle + \sup_{Q_{\rho}(x_0,t_0)}|h(x,t) - h(x_0,t_0)| \vspace{0,1cm} \\
 & \leq & \delta + C\rho.
\end{array}
\]
Now, we make the following (universal) choices
\[
\rho := \left(\dfrac{1}{2C}\right)^{\frac{1}{1-\alpha}} \;\;\;\text{and}\;\;\; \delta:=\frac{\rho^\alpha}{2}.
\]
Such choices lead to
\[
\sup_{Q_{\rho}(x_0,t_0)}|u(x,t)| \leq \rho^{\alpha},
\]
which finishes the proof.
\end{proof}

\begin{Remark}
\normalfont Note that the constant $C>0$ in \eqref{eq_hconst} depends only on the dimension and the $L^\infty$-norm of $u$ in $Q_1$. In fact, that $h$ solves a heat equation with initial-boundary data given by $u$. Therefore, given $\alpha\in(0,1)$, arbitrarily, the choice of $\rho$ depends only on $\alpha$. For that reason, the proximity regime $\delta$ depends solely on the exponent $\alpha$. Given $\delta$, we find $\varepsilon$, being also dependent on $\alpha$ alone. These universal choices set the smallness regime in Proposition \ref{prop_approx}.
\end{Remark}

In the sequel we refine Proposition \ref{prop_step1}. This is done by producing an oscillation control at discrete scales of the form $(\rho^n)_{n\in\mathbb{N}}$. Before we proceed, we introduce some notation. The scaled parabolic cylinder $\overline{Q}_\rho(x_0,t_0)$ is given by
\[
	\overline{Q}_\rho(x_0,t_0)\,:=\,B_{\rho}(x_0)\,\times\,\left(t_0-\rho^{\sigma},t_0+\rho^{\sigma}\right).
\]
\begin{Proposition}\label{prop_int}
Let $u$ be a normalized weak solution to \eqref{eq_main2}. Suppose A\ref{assump_source} is in force. Suppose further that $(x_0,t_0) \in S_0(u)$. Then, for every $\alpha \in (0,1)$, there exists $\varepsilon>0$ such that, if
\[
	(m\,-\,1)\,+\,\left\|f\right\|_{L^{p,q}(Q_1)}\,<\,\varepsilon,
\]
then
\[
	\sup_{\overline{Q}_{\rho^n}(x_0,t_0)}\left|u(x,t)\right|\,\leq\,\rho^{n\alpha},
\]
for every $n \in \mathbb{N}$
\end{Proposition}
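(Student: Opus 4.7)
The plan is to prove the claim by induction on $n \in \mathbb{N}$, with the base case $n=1$ being precisely Proposition \ref{prop_step1}. The inductive step then reduces to invoking Proposition \ref{prop_step1} on a rescaled auxiliary function that inherits all the structural assumptions on $u$.

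Assume the estimate holds at iteration level $n$. I would introduce the rescaled function
\[
v(x,t) \,:=\, \rho^{-n\alpha}\, u\!\left(x_0 + \rho^{n} x,\; t_0 + \rho^{n\sigma} t\right).
\]
By the scaling identities \eqref{scal_func}--\eqref{scal_eq} with $a = \rho^n$, $b = \rho^{n\sigma}$ and $\gamma = \rho^{n\alpha}$, the compatibility requirement $\gamma^{m-1} = a^2/b$ amounts to $(m-1)\alpha = 2-\sigma$, which holds by the very definition $\sigma := 2 + (1-m)\alpha$. Hence $v$ is itself a weak solution to a porous medium equation with the same exponent $m$ and with rescaled source term
\[
\tilde f(x,t) \,=\, \rho^{\,n(\sigma m - 2)/(m-1)}\, f\!\left(x_0 + \rho^{n} x,\; t_0 + \rho^{n\sigma} t\right).
\]
Next, I would check each hypothesis of Proposition \ref{prop_step1} for $v$: normalization $\|v\|_{L^\infty(Q_1)} \leq 1$ follows from the inductive hypothesis combined with the embedding $Q_1 \subset \overline{Q}_1$ read through the scaling; the identity $v(0,0) = 0$ places $(0,0) \in S_0(v)$; and a direct change of variables yields
\[
\|\tilde f\|_{L^{p,q}(Q_1)} \,=\, \rho^{\,n\bigl[(\sigma m - 2)/(m-1) \,-\, d/p \,-\, \sigma/q\bigr]}\, \|f\|_{L^{p,q}(Q_1)},
\]
which is bounded by $\|f\|_{L^{p,q}(Q_1)} < \varepsilon$ thanks to the non-negativity condition \eqref{eq_noneg2}.

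Applying Proposition \ref{prop_step1} to $v$ with the same exponent $\alpha$ then gives $\sup_{\overline{Q}_\rho(0,0)} |v| \leq \rho^\alpha$, and undoing the change of variables delivers $\sup_{\overline{Q}_{\rho^{n+1}}(x_0,t_0)} |u| \leq \rho^{(n+1)\alpha}$, closing the induction. The main subtle point is not the iteration itself but the careful interplay of scaling exponents: the intrinsic time scaling $b = \rho^{n\sigma}$ is \emph{forced} by the PME's structural identity once $\gamma = \rho^{n\alpha}$ is fixed, and it is precisely the hypotheses \eqref{eq_noneg1}--\eqref{eq_noneg2} on the integrability exponents $p$ and $q$ that guarantee $\tilde f$ remains admissible at every iteration level. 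Were either condition violated, the smallness regime on $f$ would degrade under rescaling and the induction would break down.
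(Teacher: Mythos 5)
Your proof is correct and follows the same strategy as the paper: induction on $n$, with Proposition \ref{prop_step1} as the base case, and a rescaling $v(x,t)=\rho^{-n\alpha}u(x_0+\rho^n x,\,t_0+\rho^{n\sigma}t)$ whose intrinsic time scaling $\rho^{n\sigma}$ is forced by the PME structure, whose normalization follows from the induction hypothesis, and whose rescaled source stays small by \eqref{eq_noneg2}. The only cosmetic difference is that the paper uses $k$ as the induction index and records the $L^{p,q}$-norm of the rescaled source in terms of $\|f\|_{L^{p,q}(Q_{\rho^k})}$ rather than $\|f\|_{L^{p,q}(Q_1)}$, which you implicitly bound from above; neither change affects the argument.
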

\begin{proof}
We prove the proposition by induction. Notice that Proposition \ref{prop_step1} accounts for the first step in the induction argument. Suppose we have verified the statement for $n = k$. It remains to verify it in the case $n = k + 1$. Consider the function
\[
v(x,t) := \dfrac{u(x_0 + \rho^k x, t_0 + \rho^{k\sigma}t)}{\rho^{k\alpha}},
\]
where 
\[
\sigma := 2+(1-m)\alpha.
\]
Because of our assumptions, we have $\sigma >0$. Furthermore, the induction hypothesis guarantees that $\|v\|_{L^{\infty}(Q_1)} \leq 1$. The scaling properties detailed in \eqref{scal_func}, \eqref{scal_prop} and \eqref{scal_eq} imply that $v$ solves 
\[
	v_t\,-\,\Delta(v^m)\,=\,\frac{\rho^{\frac{k\sigma m}{m-1}}}{\rho^{\frac{2k}{m-1}}}f \,=: \,{\tilde f}.
\]
In addition,
\[
	\left\|{\tilde f}\right\|_{L^{p,q}(Q_1)} = \rho^{k\left(\frac{\sigma m - 2}{m-1}-\frac{d}{p}-\frac{\sigma}{q}\right)}\left\|f\right\|_{L^{p,q}(Q_{\rho^k})}\,<\,\varepsilon,
\]
since the conditions in A\ref{assump_source} ensure \eqref{eq_noneg2} is available.

We conclude that $v$ falls under Proposition \ref{prop_step1}. Hence, we obtain
\[
	\sup_{Q_{\rho}(0,0)}\left|v(x,t)\right| \,\leq \,\rho^\alpha,
\]
since $(0,0)\in S_0[v]$. This implies that
\begin{align*}
	\sup_{\overline{Q}_\rho(x_0,t_0)}|u(x,t)| \,& \leq \,\rho^{\alpha}\rho^{\frac{2k}{m-1}} \,\leq\,\rho^{\alpha}\rho^{\alpha k} \,\leq\,\rho^{(k+1)\alpha},
\end{align*}
where 
\[
	\overline{Q}_\rho(x_0,t_0)\,:=\,B_{\rho^{k+1}}(x_0)\,\times\,\left(t_0-\rho^{(k+1)\sigma},t_0+\rho^{(k+1)\sigma}\right),
\]
and the proof is complete.
\end{proof}

We proceed with the proof of Theorem \ref{thm_main}. In light of Proposition \ref{prop_int}, it remains to produce a discrete-to-continuous argument, extending the oscillation control to the radii $0<r\ll 1/2$. 

\begin{proof}[Proof of Theorem \ref{thm_main}]
Let $0<r\ll 1/2$ be fixed, though arbitrary. Suppose $(x_0,t_0) \in S_0(u)$ and fix $n \in \mathbb{N}$ such that $\rho^{n+1} \leq r \leq \rho^n$. From Proposition \ref{prop_step1} we infer that 
\[
	\sup_{\overline{Q}_{\rho^n}(x_0,t_0)}\left|u(x,t)\,-\,u(x_0,t_0)\right|\,\leq\,\rho^{n\alpha}\,=\,\rho^{(n+1)\alpha}\rho^{-\alpha}.
\]
Therefore,
\begin{align*}
	\sup_{\overline{Q}_r(x_0,t_0)}\left|u(x,t)\,-\,u(x_0,t_0)\right|\,&\leq\,\sup_{\overline{Q}_{\rho^n}(x_0,t_0)}\left|u(x,t)\,-\,u(x_0,t_0)\right|\\
		&\leq\,\rho^{(n+1)\alpha}\rho^{-\alpha}\\
		&\leq\, Cr^\alpha,
\end{align*}
which ends the argument.
\end{proof}

\bibliography{biblio}
\bibliographystyle{plain}

\bigskip

\noindent\textsc{Edgard A. Pimentel (Corresponding Author)}\\
Department of Mathematics\\
Pontifical Catholic University of Rio de Janeiro -- PUC-Rio\\
22451-900, G\'avea, Rio de Janeiro-RJ, Brazil\\
\noindent\texttt{pimentel@puc-rio.br}

\bigskip

\noindent\textsc{Makson S. Santos}\\
Department of Mathematics\\
Pontifical Catholic University of Rio de Janeiro -- PUC-Rio\\
22451-900, G\'avea, Rio de Janeiro-RJ, Brazil\\
\noindent\texttt{makson.santos@mat.puc-rio.br}

\end{document}